\providecommand{\customgenericname}{}
\newcommand{\newcustomtheorem}[2]{%
  \newenvironment{#1}[1]
  {%
   \renewcommand\customgenericname{#2}%
   \renewcommand\theinnercustomgeneric{##1}%
   \innercustomgeneric
  }
  {\endinnercustomgeneric}
}
\theoremstyle{plain}
		\newtheorem{theorem}{Theorem}[section]
		\newtheorem{def-thm}{Definition-Theorem}[section]
		\newtheorem{lemma}[theorem]{Lemma}
		\newtheorem{corollary}[theorem]{Corollary}
		\newtheorem{proposition}[theorem]{Proposition}
		\newtheorem{problem}[theorem]{Problem}
		\newtheorem*{theorem*}{Theorem}
\theoremstyle{definition}
		\newtheorem{definition}[theorem]{Definition}
\theoremstyle{remark}
\newcommand{\A}{{\mathbb{A}}}
\renewcommand{\P}{{\mathbb{P}}}
\newcommand{\Q}{{\mathbb{Q}}}
\newcommand{\V}{{\mathbb{V}}}
\newcommand{\Z}{{\mathbb{Z}}}
\newcommand{\Lcal}{{\mathcal{L}}}
\newcommand{\Ocal}{{\mathcal{O}}}
\newcommand{\mfrak}{{\mathfrak{m}}}
\newcommand{\id}{{\textup{id}}}
\DeclareMathOperator{\Aut}{Aut}
\DeclareMathOperator{\End}{End}
\DeclareMathOperator{\Fin}{Fin}
\DeclareMathOperator{\Hom}{Hom}
\DeclareMathOperator{\sheafhom}{\mathcal{H}\kern -.5pt \emph{om}}
\DeclareMathOperator{\Mat}{Mat}
\DeclareMathOperator{\Pic}{Pic}
\DeclareMathOperator{\PGL}{PGL}
\DeclareMathOperator{\SL}{SL}
\DeclareMathOperator{\Span}{Span}
\DeclareMathOperator{\Tor}{Tor}
\DeclareMathOperator{\Preper}{Preper}
\title{On finite categories of algebraic varieties}
\author{Junho Peter Whang}
\address{Department of Mathematical Sciences and RIM,
Seoul National University}
\email{jwhang@snu.ac.kr}
\date{\today}
\begin{document}

\begin{abstract}
We prove that the finiteness of a finitely generated category of irreducible algebraic varieties over a field of characteristic zero is decidable. We also obtain a Burnside finiteness criterion for such a category, with applications to algebraic dynamical systems of several maps.
\end{abstract}

\maketitle

%\setcounter{tocdepth}{1}
%\tableofcontents

\section{Introduction}\label{sect:1}
\subsection{\unskip} Jacob \cite{jacob} %and Mandel-Simon \cite{ms} independently%
showed that the finiteness of a finitely generated monoid of matrices over a field is decidable. This paper provides a nonlinear generalization of this result for finitely generated categories of irreducible algebraic varieties, over fields of characteristic zero. In this paper, by an algebraic variety we mean a reduced separated scheme of finite type over a field. Let us define a \emph{system} in a category $C$ to be a quiver (i.e., directed multigraph) whose vertices are objects in $C$ and whose arrows are morphisms in $C$ between the vertices. In other words, a system in $C$ specifies the generators of a subcategory of $C$.

\begin{theorem}
\label{mainthm0}
Let $k$ be a field of characteristic zero. There exists an algorithm to determine, given an explicit finite system of irreducible algebraic varieties over $k$, whether or not the category it generates is finite.
\end{theorem}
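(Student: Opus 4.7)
My first step is to translate finiteness of the generated category into finiteness of a single finitely generated monoid. Let $X = \bigsqcup_i X_i$, and adjoin a formal absorbing zero representing compositions whose source and target do not match across components. Extending each generator $f_j : X_{a(j)} \to X_{b(j)}$ to a partial self-morphism of $X$ (equal to $f_j$ on the component $X_{a(j)}$ and zero elsewhere), the generated category is finite if and only if the submonoid $M$ of the resulting partial-morphism monoid of $X$ generated by these extensions is finite. This recasts the problem as a Burnside-style question.

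\noindent\textbf{A Burnside finiteness criterion.} The central technical input I would establish is a Burnside-type criterion: $M$ is finite if and only if every element of $M$ is periodic, i.e., $f^n = f^m$ for some $n > m \geq 0$. The characteristic-zero hypothesis is crucial here---in positive characteristic, Frobenius-type endomorphisms obstruct such a criterion. The key algebraic lemma I anticipate is the following: if $f : Y \to Y$ is a dominant self-map of an irreducible variety in characteristic zero with $f^n = f^m$ for some $n > m$, then the induced embedding $f^*$ of $k(Y)$ stabilizes a subfield $L \subseteq k(Y)$ on which it acts as a finite-order automorphism, and the data of $L \subseteq k(Y)$ is effectively controlled. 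Non-dominant generators are handled by induction on $\dim Y$: one replaces the target by the scheme-theoretic image, an irreducible variety of strictly smaller dimension, and applies the inductive hypothesis to the restricted morphisms. The combinatorics of how dominant loops and non-dominant contractions interact within $M$ then produces only finitely many elements.

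\noindent\textbf{From criterion to algorithm.} For decidability, I would upgrade the Burnside criterion to an effective one: there is a length bound $N$, computable from the input (dimensions, degrees, number of generators, and a finite presentation of the subfield of $k$ over which everything lives), such that $M$ is finite iff every composition word in the generators of length at most $N$ is periodic. Given such a bound the algorithm is clear: enumerate composition words of length at most $N$, test each for periodicity, and compare morphisms pairwise. Equality of two morphisms of explicitly given irreducible varieties is decidable, reducing to polynomial identity testing over a finitely generated subfield of $k$. The main obstacle I foresee is producing the effective bound in the Burnside criterion, which requires uniform control over the torsion in the automorphism groups of the varieties at hand and of their scheme-theoretic images---this should rest on boundedness results for automorphism groups of varieties with bounded invariants, combined with the rigidity of characteristic-zero embeddings of finitely generated fields.
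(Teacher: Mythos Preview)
Your overall architecture---separating dominant from non-dominant arrows and inducting on dimension for the latter---matches the paper's. Two points diverge, and one of them is a genuine gap.

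First, your lemma on dominant periodic self-maps is stated too weakly. If $f:Y\to Y$ is dominant on an integral separated scheme and $f^n=f^m$ with $n>m$, then $f^*$ is injective on $k(Y)$; from $(f^*)^n=(f^*)^m$ and injectivity one gets $(f^*)^{n-m}=\id$ on \emph{all} of $k(Y)$, not merely on some subfield $L$. Hence $f^{n-m}=\id$ on a dense open, and by separatedness on all of $Y$: $f$ is an automorphism of finite order. This is the paper's Lemma~\ref{finite-order}, and it is what reduces the dominant path-connected case to a question about finite subgroups of $\Aut(Z/k)$ for a single vertex $Z$.

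Second---and this is the gap---you correctly flag the effective bound as the main obstacle but point in the wrong direction. ``Boundedness results for automorphism groups of varieties with bounded invariants'' is not what is used, and would be both harder and less uniform. The paper invokes the effective nonlinear Selberg lemma of Bass--Lubotzky: for $Z$ integral and flat of finite type over a finitely generated ring, and any closed point $x\in Z$, the kernel of $\Aut(Z)\to\Aut\big(Z(\Ocal_{Z,x}/\mfrak_{Z,x}^2)\big)$ has only $p$-power torsion, where $p=\Char\kappa(x)$. Picking two closed points of coprime residue characteristic yields an \emph{explicit} finite group into which every finite subgroup of $\Aut(Z)$ injects. This produces a computable constant $C(S_0)$, depending only on the vertices, bounding $|\langle S\rangle|$ in the dominant path-connected case (Proposition~\ref{dominant}); the algorithm is then simply to enumerate composites and check whether more than $C(S_0)$ distinct ones appear. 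Your route through a periodicity bound $N$ would work once you have this, but is a detour: the paper proves the Burnside criterion (your step~2, its Theorem~\ref{mainthm1}) as a \emph{consequence} of the same machinery, not as an input to decidability.

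Minor remarks: the reduction to a monoid with absorbing zero is harmless but unnecessary---the paper stays with categories and reduces to path-connected systems via Lemma~\ref{connected}. The non-dominant step is also more structured than ``replace the target by the image'': one removes the offending arrow $f:A\to B$ to form $S'$, decides $\langle S'\rangle$ first, and only then forms a system $S''$ on $Z=\overline{f(A)}$ whose arrows are the restrictions $(f\circ g)|_Z$ for $g\in\Hom_{\langle S'\rangle}(B,A)$; finiteness of $\langle S'\rangle$ and $\langle S''\rangle$ together gives that of $\langle S\rangle$.
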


We make precise the notion of an explicitly given system of varieties in Section \ref{sect:3.1}. We prove Theorem \ref{mainthm0} by induction on the complexity of the system, using two ingredients: an effective form of nonlinear Selberg's lemma due to Bass-Lubotzky \cite{bl}, and the observation that dominant endormophisms of finite order on integral schemes are automorphisms (Lemma \ref{finite-order}). A similar argument yields a solution to the strong Burnside problem for categories of varieties in characteristic zero, generalizing (in characteristic zero) the work of McNaughton-Zalcstein \cite{mz} on matrix monoids. Let us say that a category is \emph{torsion} if every endomorphism of every object generates a cyclic monoid of finite order under composition.

\begin{theorem}
\label{mainthm1}
Let $k$ be a field of characteristic zero. Let $C$ be a finitely generated subcategory of the category of irreducible algebraic varieties over $k$. Then $C$ is finite if and only if it is torsion.
\end{theorem}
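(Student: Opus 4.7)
The direction finite $\Rightarrow$ torsion is immediate, since every element of a finite monoid generates a finite cyclic submonoid. For the converse, suppose $C$ is torsion and generated by a finite system $S = (V,A)$ with $V = \{X_1,\ldots,X_r\}$. The plan is to prove $C$ is finite by strong induction on the complexity $n(S) := \sum_i \dim X_i$. The base case $n(S)=0$ is essentially free: each $X_i = \Spec L_i$ for a finite extension $L_i/k$, and $\Hom_C(X_i,X_j)$ embeds into the finite set of $k$-embeddings $L_j \hookrightarrow L_i$.

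For the inductive step, every morphism $f:X_i \to X_j$ in $C$ factors canonically as $f = \iota_Y \circ f^{\circ}$ with $f^{\circ} : X_i \to Y$ dominant onto the reduced scheme-theoretic image $Y \subseteq X_j$ (irreducible since $X_i$ is) and $\iota_Y : Y \hookrightarrow X_j$ a closed immersion. I split the analysis into dominant and non-dominant strata. For the dominant stratum, Lemma~\ref{finite-order} together with torsion promotes every dominant endomorphism $f \in \End_C(X_i)$ to an automorphism, so the submonoid $G_i \subseteq \End_C(X_i)$ of dominant endomorphisms is in fact a subgroup of $\Aut(X_i)$. Following the bookkeeping used to prove Theorem~\ref{mainthm0}, one presents $G_i$ as finitely generated (words in $A$ of suitably bounded length suffice), and then applies the nonlinear Selberg lemma of Bass--Lubotzky: $G_i$ contains a torsion-free subgroup of finite index, which must be trivial because $C$ is torsion. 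Hence $G_i$ is finite, and a parallel argument handles the off-diagonal dominant morphisms in $C$ by viewing them as torsors under the finite groups attached to source and target.

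For the non-dominant stratum, the image $Y$ of each non-dominant morphism satisfies $\dim Y < \dim X_j$, which is the crucial dimension drop. I would adjoin to $V$ the finite list of images $Y$ arising from non-dominant generators (and from compositions up to an appropriate length bound), forming an auxiliary system $S' = (V',A')$ whose arrows include the dominant factorizations $f^{\circ}$, the residual dominant generators of $S$, and the closed immersions $\iota_Y$. A combinatorial pruning---removing vertices $X_j$ whose incident behaviour in $C$ factors entirely through strict subvarieties---achieves $n(S') < n(S)$. Since endomorphism monoids in $\langle S' \rangle$ embed into those of $C$ via the dominant factorizations, $\langle S' \rangle$ is again torsion, and the inductive hypothesis makes it finite. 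Composing with the finitely many closed immersions then bounds the non-dominant part of $C$.

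The main obstacle lies in executing the non-dominant step: one must design $S'$ so that it both captures every non-dominant composition arising in $C$ and achieves a genuine drop in complexity. This is precisely the recursive decomposition already developed to prove Theorem~\ref{mainthm0}; the present Burnside version substitutes, at the dominant stratum, the algorithmic finiteness test by the finite-versus-infinite dichotomy supplied by Lemma~\ref{finite-order} together with the Bass--Lubotzky nonlinear Selberg lemma.
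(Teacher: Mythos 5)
Your skeleton (torsion plus Lemma~\ref{finite-order} turning dominant endomorphisms into automorphisms, Bass--Lubotzky for the dominant part, dimension drop through images otherwise) is the right one and is essentially the paper's, but the induction you set up cannot carry it, and the step you yourself flag as ``the main obstacle'' is the actual heart of the proof, left unexecuted. The paper inducts on the pair $(|S_1|,\max_{V\in S_0}\dim V)$, not on $\sum_i\dim X_i$: when some generator $f\colon A\to B$ is non-dominant, one first deletes $f$ to form $S'$ --- a step that lowers the number of arrows but changes no dimension, so your single invariant $n(S)$ does not decrease and your strong induction has nothing to recurse on at exactly this point. Moreover, the finiteness of $\langle S'\rangle$, obtained from that first induction, is what makes the induced system on $Z=\overline{f(A)}$ a \emph{finite} system: its arrows are the maps $(f\circ g)|_Z$ with $g\in\Hom_{\langle S'\rangle}(B,A)$, and this set is finite only because $\langle S'\rangle$ is already known to be finite. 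Your substitutes --- ``compositions up to an appropriate length bound'' and a ``combinatorial pruning'' that discards vertices so that $n(S')<n(S)$ --- are not available a priori: before finiteness is established there is no bound on the number of distinct non-dominant compositions, adjoining the image vertices increases $\sum\dim$ rather than decreasing it, and discarding a vertex whose morphisms factor through strict subvarieties is not justified (those morphisms still have to be counted, via the induced system on the subvariety). Deferring to ``the recursive decomposition of Theorem~\ref{mainthm0}'' does not repair this, because that decomposition is incompatible with your choice of induction variable.

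The dominant stratum also has a gap as you treat it. Bass--Lubotzky requires the group to be finitely generated, and your claim that $G_i\subseteq\End_C(X_i)$ (all dominant endomorphisms, inside a mixed system) is generated by bounded-length words is unsupported: a dominant endomorphism may factor through non-dominant arrows when other vertices have larger dimension (e.g.\ $\A^1\hookrightarrow\A^2\to\A^1$ composing to the identity), so the dominant part of $C$ is not generated by the dominant generators, and its finite generation is unclear; likewise the torsor description of off-diagonal dominant Hom-sets needs invertibility, which comes from path-connectedness by dominant arrows, not from torsion alone. The paper avoids all of this with a clean dichotomy: after reducing to a path-connected system (Lemma~\ref{connected}), either \emph{every} generator is dominant --- then torsion plus Lemma~\ref{finite-order} makes $C$ a finitely generated connected groupoid, its vertex group $C_V$ is finitely generated for formal groupoid reasons, and Theorem~\ref{bl0}(2) (virtual torsion-freeness in characteristic zero) together with torsion forces $C_V$, hence $C$, to be finite --- or some generator is non-dominant, and the two-variable induction above applies, after checking that the torsion hypothesis passes to $\langle S'\rangle$ and to the induced system on $Z$ (whose endomorphisms are restrictions of endomorphisms in $C$). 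If you reorganize your argument around that dichotomy and the lexicographic induction on $(|S_1|,\max\dim)$, the rest of your outline goes through.
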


Over general fields, one can deduce a weaker finiteness criterion for categories of varieties, establishing the analogue of the bounded Burnside problem. This relies on Zelmanov's resolution of the restricted Burnside problem \cite{z1, z2}. One source of motivation for our work is the study of finite orbits in dynamics of several maps on algebraic varieties. For instance, we prove the following.

\begin{corollary}
\label{cor1}
Let $k$ be a field. Let $M$ be a finitely generated monoid acting on an algebraic variety $V/k$. Let $x\in V(k)$. Then $M\cdot x$ is finite if and only if
    $$\sup_{N} |N\cdot x|<\infty$$
where $N$ runs over all $2$-generated submonoids of $M$.
\end{corollary}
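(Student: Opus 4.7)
The forward direction is immediate, since every $N\cdot x \subseteq M\cdot x$. For the converse assume $C := \sup_N |N \cdot x| < \infty$, and the plan is to package the situation into a finitely generated category of varieties to which the paper's Burnside-type criterion applies. Set $V' = \overline{M\cdot x}$ and decompose $V' = V_1 \cup \cdots \cup V_s$ into its (finitely many) irreducible components. Each $V_j \cap M\cdot x$ is Zariski dense in $V_j$: if $W_j := \overline{V_j \cap M\cdot x}$ were a proper subset of $V_j$, then $V_j \not\subseteq \bigcup_l W_l$, since $V_j$ is irreducible and not contained in any other component, contradicting $\bigcup_l V_l = V' = \overline{M\cdot x} = \bigcup_l W_l$. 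Fix generators $m_1, \ldots, m_r$ of $M$. Each $m_i(V_j)$ is irreducible and lies inside $V'$, hence in a unique component $V_{\pi_i(j)}$; this produces a finite system $\Sigma$ with vertices $V_1, \ldots, V_s$ and arrows $m_i : V_j \to V_{\pi_i(j)}$. Let $\Ccal$ denote the subcategory of varieties it generates.

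The crux is to verify that $\Ccal$ is uniformly bounded-torsion. Let $f : V_j \to V_j$ be any endomorphism in $\Ccal$; by construction $f = m|_{V_j}$ for a word $m \in M$ with $m(V_j) \subseteq V_j$. For any $y = m' \cdot x \in V_j \cap M\cdot x$ one computes
\[
\langle f \rangle \cdot y \;=\; \{m^k m' \cdot x\}_{k \geq 0} \;\subseteq\; \langle m, m' \rangle \cdot x,
\]
so $|\langle f \rangle \cdot y| \leq C$. Hence the sequence $y, f(y), f^2(y), \ldots$ has tail and period each bounded by $C$; setting $L := \lcm(1, 2, \ldots, C)$ gives $f^{C+L}(y) = f^C(y)$ for every such $y$. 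The equalizer of $f^{C+L}$ and $f^C$ is a closed subscheme of the reduced, separated scheme $V_j$ containing the dense set $V_j \cap M\cdot x$, so it must equal $V_j$. Consequently $f^{C+L} = f^C$ on $V_j$, and $|\langle f \rangle| \leq C + L$, uniformly in $f$.

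With $\Ccal$ uniformly bounded-torsion I would now appeal to the Burnside-type criterion for categories of varieties: Theorem \ref{mainthm1} in characteristic zero (since in particular $\Ccal$ is torsion), and over a general field the bounded-exponent analogue flagged in the introduction, which rests on Zelmanov's restricted Burnside theorem. Either way $\Ccal$ is finite. Choosing $j_0$ with $x \in V_{j_0}$, the map $m \mapsto m|_{V_{j_0}}$ sends $M$ into $\bigsqcup_{j'} \Hom_{\Ccal}(V_{j_0}, V_{j'})$, a finite set, and evaluating at $x$ yields $|M \cdot x| < \infty$. I expect the main obstacle to be twofold: first, the general-field bounded Burnside statement for categories of varieties is not yet established in the excerpt and must be invoked as a black box from later in the paper; second, the step from pointwise equalities on $V_j \cap M\cdot x$ to equality of morphisms on $V_j$ relies on a careful use of Zariski density together with the reducedness of $V_j$ and separatedness of the target.
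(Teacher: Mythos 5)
Your proof is correct and follows essentially the same route as the paper: restrict the generators of $M$ to the irreducible components of $\overline{M\cdot x}$ to get a finite system, use the $2$-generated orbit bound plus Zariski density and reducedness/separatedness to show the generated category is $n$-torsion for a uniform $n$, and then invoke the Burnside criterion to conclude finiteness. The ``bounded-exponent analogue over a general field'' you worried about invoking as a black box is exactly Theorem \ref{extmainthm1}(1), which the paper does establish, so your argument is complete; your $\lcm$ uniformization is just a slightly more explicit version of the paper's pigeonhole step.
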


Given a set $X$ and a monoid $M$ of endomorphisms of $X$, let us say that a point $x\in X$ is \emph{$M$-periodic} if the $M$-orbit $M\cdot x$ is finite and $M$ permutes the elements of $M\cdot x$. In the case where $k$ is a field of characteristic zero and $M$ is a finitely generated monoid acting on a variety $V/k$, one can combine a refined form of Corollary \ref{cor1} with \cite[Theorem 1.2]{whang} to obtain the following.

\begin{theorem}
\label{finorb}
Let $k$ be a field of characteristic zero.
Let $M$ be a finitely generated monoid acting on an algebraic variety $V/k$. Let $x\in V(k)$. Then the following are equivalent:
\begin{enumerate}
    \item [(a)] $x$ is $M$-periodic.
    \item [(b)] $x$ is $N$-periodic for every $2$-generated submonoid $N\leq M$.
\end{enumerate}
If moreover $M$ is a group, then the above are equivalent to:
\begin{enumerate}
    \item [(c)] $x$ is $\langle f\rangle$-periodic for every $f\in M$.
\end{enumerate}
\end{theorem}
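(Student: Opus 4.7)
The implication (a) $\Rightarrow$ (b) is immediate: for any submonoid $N\leq M$, every $n\in N\subseteq M$ permutes the finite set $M\cdot x$, and restricts to a bijection on the $N$-invariant finite subset $N\cdot x\subseteq M\cdot x$; hence $x$ is $N$-periodic. In the group case, (b) $\Rightarrow$ (c) is similarly routine since $\langle f\rangle$ is $2$-generated (by $f$ and $f^{-1}$).

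For the substantive direction (b) $\Rightarrow$ (a), I would proceed in two steps. \emph{Step 1.} Show $Y:=M\cdot x$ is finite. Here I would appeal to a refined form of Corollary \ref{cor1}, strengthening its hypothesis from ``$\sup_N|N\cdot x|<\infty$'' to merely ``each $N\cdot x$ is finite under the permutation-action assumption of (b)''; the missing uniform bound should be supplied implicitly by applying Theorem \ref{mainthm1} to auxiliary subcategories of $\Var_k$ built from the $N$-actions on their finite orbit closures, which are forced to be torsion and hence finite. \emph{Step 2.} Given Step 1, fix $f\in M$ and $y=g\cdot x\in Y$. Then $\langle f,g\rangle\leq M$ is $2$-generated and contains $y$ in its orbit $\langle f,g\rangle\cdot x$, which by (b) is finite and permuted by $f$; hence some $k_y\geq 1$ has $f^{k_y}(y)=y$. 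Setting $L=\lcm\{k_y:y\in Y\}$, which is finite since $Y$ is finite, we get $f^L|_Y=\id_Y$ and thus $f|_Y\in\Sym(Y)$. As $f\in M$ was arbitrary, $M$ permutes $Y$, establishing (a).

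For the group case (c) $\Rightarrow$ (a), one must bridge from elementwise finiteness of $\langle f\rangle\cdot x$ to the $2$-generated analogue (b). Here I would invoke \cite[Theorem 1.2]{whang}, which I expect furnishes exactly this upgrade: that in characteristic zero a finitely generated group acting on a variety has the property that pointwise-finite orbits at a single point $x$ (one for each group element) already force orbit-finiteness under every finitely generated subgroup, with the permutation structure preserved. Composing this upgrade with the preceding (b) $\Rightarrow$ (a) argument then yields (c) $\Rightarrow$ (a).

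The principal obstacle is Step 1: the passage from finiteness of each $2$-generated orbit $N\cdot x$ to finiteness of the full orbit $M\cdot x$, without access to a uniform cardinality bound. This is precisely where the characteristic-zero Burnside criterion of Theorem \ref{mainthm1} should intervene; the delicate technical point is to assemble, from the permutation actions of the various $N\leq M$ on $N\cdot x$, a finitely generated category of varieties that is torsion (and thus finite by Theorem \ref{mainthm1}), so that the ambient $M$-orbit is forced to close up.
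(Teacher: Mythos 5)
Your easy implications and your Step 2 (once $M\cdot x$ is known finite, using (b) to see that each $f\in M$ acts on it with every point $f$-periodic, hence by a permutation) are fine, and you have correctly located the difficulty in Step 1. But your plan for Step 1 has a genuine gap, and it is not the route the paper takes. The missing uniform bound cannot be ``supplied implicitly by Theorem \ref{mainthm1}'': to apply the Burnside criterion to an auxiliary category built from the action on the irreducible components of the Zariski closure of $M\cdot x$ (this is exactly the proof of Corollary \ref{orbit}), you must first show that every endomorphism occurring there has finite order, and the only available input is that a Zariski-dense set of points (the orbit points) is periodic under it. Pointwise periodicity with unbounded periods does not imply finite order of the map --- consider $t\mapsto t^2$ on $\mathbb{G}_m$, for which the roots of unity form a dense set of periodic points of unbounded period --- so without a uniform bound on the periods the torsion hypothesis of Theorem \ref{mainthm1} cannot be verified, and your plan is circular: the uniformity is exactly what you are trying to produce.

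The paper resolves this with an arithmetic ingredient you do not use at this step: by \cite[Theorem 1.2]{whang} there is a constant $n$, depending only on $V$ and the finitely generated ring over which $V$, $x$ and $M$ are defined, bounding the size of any periodic orbit; applied to the $N$-periodic orbits furnished by (b), it gives $|N\cdot x|\leq n$ uniformly over all $2$-generated $N\leq M$, and then Corollary \ref{orbit} (whose proof is where Theorem \ref{mainthm1} genuinely enters, the $n$-torsion property being verified from this uniform bound by pigeonhole on a dense set of orbit points) yields finiteness of $M\cdot x$, hence (a). The same theorem --- not an ``upgrade from pointwise finiteness to finitely generated subgroup periodicity,'' as you describe it --- is what handles (c)$\implies$(a): periodicity of $x$ under each $\langle g\rangle$ together with the uniform bound of \cite[Theorem 1.2]{whang} bounds $|\langle g\rangle\cdot x|$, a conjugation trick (valid since $M$ is a group) converts this into the bound on $|\langle f\rangle\cdot g(x)|$ required in Corollary \ref{orbit}(3), and finiteness of $M\cdot x$ follows; as $M$ is a group it then permutes $M\cdot x$. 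Reading \cite[Theorem 1.2]{whang} as already containing the passage from elementwise periodicity to periodicity under finitely generated subgroups would make it essentially equivalent to the theorem being proved; what it actually supplies is the uniform bound on periodic orbit lengths, and that bound is indispensable already in (b)$\implies$(a).
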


This paper is organized as follows. Section \ref{sect:2} collects the necessary background, including results of Bass-Lubotzky \cite{bl} and Zelmanov \cite{z1, z2}. In Section \ref{sect:3}, we prove Theorems \ref{mainthm0} and \ref{mainthm1}. In Section \ref{sect:4}, we consider dynamical corollaries of our main results, and in particular prove Corollary \ref{cor1} and Theorem \ref{finorb}.

\subsection{Acknowledgments}
I thank Abhishek Oswal for helpful conversations. In particular, the proof of Theorem \ref{mainthm0} was inspired by an unpublished collaborative work on finite matrix monoids. This work was supported by the Samsung Science and Technology Foundation under Project Number SSTF-BA2201-03.

\section{Background}\label{sect:2}
\subsection{Notations} Let us set up the notations and terminology that will be used throughout the paper. A quiver is a directed multigraph. Given a quiver $Q$, we write $Q_0$ for the class of its vertices and $Q_1$ for the class of its arrows. Let $s,t:Q_1\rightrightarrows Q_0$ denote the source and target maps of $Q$. We shall say that a quiver is \emph{small} if $Q_0$ and $Q_1$ are sets. We shall view a category as a quiver along with a composition law on its arrows that satisfies the usual axioms.

Let $C$ be a category. If $X,Y\in C_0$, we write
$\Hom_{C}(X,Y)$
for the set of morphisms in $C$ from $X$ to $Y$. We write $\End_C(X)=\Hom_{C}(X,X)$. Given $X\in C_0$, we write $C_X$ for the full subcategory of $C$ with a single object $X$. Thus, $(C_X)_1=\End_C(X)$. In this paper, a \emph{monoid} is a small category whose object set is a singleton. A monoid is \emph{cyclic} if it is generated by a single endomorpism. A \emph{groupoid} is a small category whose morphisms are all invertible.

\begin{definition}
Let $C$ be a small category.
    \begin{enumerate}
        \item The \emph{order} of $C$ is $|C|=| C_1|$.
        \item We say $C$ is \emph{finite} if it has finite order. 
        \item We say $C$ is \emph{torsion} if every cyclic submonoid of $C$ is finite.
        \item We say $C$ is \emph{$n$-torsion} if every cyclic submonoid of $C$ has order $\leq n$.
    \end{enumerate}
\end{definition}

Forgetting the composition law on a category $C$, we obtain a quiver (i.e.~directed multigraph) whose collection of vertices is $C_0$, whose collection of arrows is $C_1$, and whose source and target maps are $s$ and $t$.

\begin{definition}
A \emph{system} in a category $C$ to be a small subquiver of the quiver underlying $C$. Given a system $S$ in $C$, let $\langle S\rangle$ denote the subcategory of $C$ generated by $S$, i.e.~smallest subcategory $C'$ of $C$ such that $S_0=C'_0$ and $S_1\subseteq C'_1$. We say that a category $C$ is \emph{finitely generated} if there is a finite quiver $S$ in $C$ (i.e.~with $|S_0|$ and $|S_1|$ finite) such that $C=\langle S\rangle$.
\end{definition}

%\begin{definition}
%Let $S$ be a system of sets. The \emph{total space} of $S$ is $\V(S)=\coprod_{V\in S_0}V$. We say that a subset $W\subseteq\V(S)$ is \emph{$S$-invariant} if, for every $f\in\langle S\rangle_1$,
%    $$f(W\cap s(f))\subseteq W.$$
%Let $W\subseteq\V(S)$. The \emph{$S$-core} $[W]^S$ is the largest $S$-invariant subset of $W$. The \emph{$S$-orbit} $[W]_S$ is the smallest $S$-invariant subset of $\V(S)$ containing $W$.

%Suppose now that $W\subseteq\V(S)$ is $S$-invariant. The \emph{restriction} of the system $S$ to $W$ is the system $S|_W$ of sets where:
%\begin{enumerate}
%    \item $(S|_W)_0=\{W\cap V:V\in\V(S)\}$, and
%    \item $(S|_W)_1=\{f|_{W\cap V_1}\in\Hom(W\cap V_1,W\cap V_2):f\in\Hom(V_1,V_2),V_i\in S_0\}$.
%\end{enumerate}
%Given an arbitrary set $W\subseteq\V(S)$, we endow $[W]^S$ and $[W]_S$ with the structure of a system via restriction of $S$. We say that $W$ has a finite (resp.~torsion, resp.~$n$-torsion) $S$-orbit if $\langle S|_{[W]_S}\rangle $ is finite (resp.~torsion, resp.~$n$-torsion). We define:
%\begin{align*}
%\Fin(S)&=\{x\in\V(S):\text{$x$ has a finite $S$-orbit}\}\\
%\Tor(S)&=\{x\in\V(S):\text{$x$ has a torsion $S$-orbit}\}\\
%\Tor_n(S)&=\{x\in\V(S):\text{$x$ has an $n$-torsion $S$-orbit}\}.
%\end{align*}
%\end{definition}

%Given a finite system $S$ of sets and $f\in S_1$, let us write $f_{\V(S)}$ for the map $\V(S)\to\V(S)$ given by
%$$f_{\V(S)}(x)=\left\{\begin{array}{ll}f(x) & x\in s(f)\\ x & \text{otherwise.}\end{array}\right.$$

\begin{definition}
Let $S$ be a quiver. A \emph{path} in $S$ from $v\in S_0$ to $w\in S_0$ is a sequence $f_1,\dots,f_k\in S_1$ of arrows such that $s(f_1)=v$, $t(f_k)=w$, and $t(f_i)=s(f_{i+1})$ for $i=1,\dots,k-1$. Two vertices $v,w\in S_0$ are \emph{path-equivalent} if there is a path in $S$ from $v$ to $w$ and there is a path in $S$ from $w$ to $v$. We denote by $S^\circ$ the quiver obtained from $S$ by deleting the arrows between vertices that are not path-equivalent. A system $S$ is \emph{path-connected} if $S=S^\circ$. A \emph{path-component} of $S$ is a maximal subquiver of $S$ that is path-connected.
\end{definition}

\begin{lemma}
\label{connected}
A finitely generated category $C$ is finite if and only if $C^\circ$ is finite.
\end{lemma}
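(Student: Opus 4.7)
The forward direction is immediate: $C^\circ$ is a sub-quiver of $C$, so if $C$ is finite then so is $C^\circ$.

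For the converse, I will suppose $C^\circ$ is finite and fix a finite system $S$ with $C = \langle S\rangle$, so that $C_0 = S_0$ is finite. The task reduces to showing that $\Hom_C(X,Y)$ is finite for each pair $X,Y \in C_0$. My plan is to exploit the path-equivalence partition of $C_0$, which groups objects into finitely many path-components $P_1, \dots, P_n$; the induced quotient is a directed acyclic graph, since a directed cycle in the quotient would collapse the components involved into one. When $X$ and $Y$ lie in the same $P_i$, every morphism $X \to Y$ is by definition an arrow of $C^\circ$, so finiteness of $\Hom_C(X,Y)$ is immediate from the hypothesis.

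The main obstacle is the case when $X \in P_i$ and $Y \in P_j$ with $i \neq j$. Given $f = f_k \circ \cdots \circ f_1 \in \Hom_C(X,Y)$ with each $f_l \in S_1$, tracking the path-component of each intermediate vertex produces a directed walk $[X]=[c_0], [c_1], \dots, [c_k]=[Y]$ in the quotient DAG. By acyclicity, this walk changes component at most $n-1$ times. Grouping consecutive generators staying within a single component, I obtain a factorization
$$f = h_m \circ g_m \circ h_{m-1} \circ \cdots \circ g_1 \circ h_0$$
with $m \le n-1$, where each $g_i \in S_1$ is a ``cross'' generator (source and target in distinct components) and each $h_i$ is a morphism of $C$ between path-equivalent vertices, hence an arrow of $C^\circ$.

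To finish, I note that the number of admissible cross-generator sequences $g_1, \dots, g_m$ is at most $n\,|S_1|^{n-1}$, which is finite; and once such a sequence is fixed, the source and target of each $h_i$ are determined by $X$, $Y$, and the $g_i$'s, so the tuple $(h_0, \dots, h_m)$ is drawn from a product of finite Hom-sets of $C^\circ$. This bounds $|\Hom_C(X,Y)|$, and summing over the finitely many pairs yields that $C_1$ is finite, completing the proof.
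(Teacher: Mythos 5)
Your proof is correct and takes essentially the same route as the paper's: both decompose an arbitrary morphism as an alternation of morphisms of $C^\circ$ with ``crossing'' generators from $S_1$, bound the number of crossings by the number of path-components (the paper's bound $k\leq N$ is exactly your acyclicity-of-the-condensation argument), and conclude that $C_1$ is finite by counting the finitely many such factorizations. You simply make explicit the bookkeeping (the quotient being a DAG, the sources and targets of the intermediate $C^\circ$-morphisms) that the paper leaves implicit.
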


\begin{proof}
If $C$ is finite, then obviously $C^\circ$ is finite. Suppose conversely that $C^\circ$ is finite. Let $N$ be the number of path-components of $C$. Let $S$ be a finite system of generators for $C$. Every morphism of $f$ can be written in the form
$$f=g_kh_{k}g_{k-1}h_{k-1}\dots g_{2}h_{1}g_{0}$$
for some $k\leq N$, where $g_i\in c(C)_1$ and $h_i\in S_1\setminus c(C)_1$ for each $i=0,\dots, k$. It follows that $C_1$ is finite, so $C$ is finite.
\end{proof}

\subsection{Bass-Lubotzky}\label{sect:3.1}
We recall a theorem of Bass-Lubotzky \cite[Corollary (1.2)]{bl}.

\begin{theorem}
\label{bl0}
    Let $k$ be an arbitrary ring. Let $G$ be a finitely generated group of automorphisms of a scheme $V$ of finite presentation over $k$.
    \begin{enumerate}
        \item $G$ is residually finite.
        \item If $V$ is flat over $\Z$, then $G$ is virtually torsionfree.
    \end{enumerate}
\end{theorem}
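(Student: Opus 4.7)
The plan is to linearize the $G$-action on $V$ at closed points via finite-order jets, and thereby reduce both parts to classical results on finitely generated linear groups --- Malcev's theorem for residual finiteness, and Selberg's lemma for virtual torsion-freeness in characteristic zero. First I would spread out: writing $G=\langle g_1,\dots,g_r\rangle$ and using the finite presentation of $V$ over $k$, one descends $V$ and each $g_i$ to a scheme $V_A$ of finite presentation over a finitely generated $\Z$-subalgebra $A\subseteq k$, yielding an embedding $G\hookrightarrow\Aut_A(V_A)$. The ring $A$ is Jacobson with finite residue fields at all maximal ideals of $\Spec A$, by the Nullstellensatz for finitely generated $\Z$-algebras.

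For a closed point $x\in V_A$ lying over a maximal ideal $\mfrak\subset A$, the pointwise stabilizer $G_x\leq G$ acts on each truncated jet algebra $J^n_x:=\Ocal_{V_A,x}/\mfrak_x^{n+1}$, producing homomorphisms $\rho_n:G_x\to\GL_{d_n}(A/\mfrak)$ with $d_n:=\dim_{A/\mfrak}J^n_x$. A key rigidity observation is that if $g\in G_x$ lies in $\bigcap_n\ker\rho_n$, then $g$ acts trivially on $\hat\Ocal_{V_A,x}$, hence on a Zariski neighborhood of $x$, hence on all of $V_A$ by irreducibility of $V$; so the family $(\rho_n)_n$ is a separating system of finite quotients of $G_x$.

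For (1), given $1\neq g\in G$ I would use a Hilbert-scheme / orbit-intersection argument, exploiting the finite presentation of $V_A$, to locate a closed point $x$ with finite $G$-orbit on which $g$ acts nontrivially at some jet level. The common pointwise stabilizer $U:=\bigcap_{h\in G\cdot x}G_h$ then has finite index in $G$, and the diagonal representation on the jet algebras of $G\cdot x$ gives a finite quotient of $U$ detecting $g$, which one promotes to a finite quotient of $G$ by taking the normal core. For (2), the flatness of $V$ over $\Z$ permits choosing $A$ flat over $\Z$ and passing to a closed point of $V_A\otimes_\Z\Q$ whose residue field $F$ has characteristic zero; Selberg's lemma applied to $\rho_n(U)\subseteq\GL_{d_n}(F)$ then furnishes a torsion-free finite-index subgroup of $U$, promoted to $G$ as before.

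The main obstacle is arranging the orbit $G\cdot x$ to be finite while retaining enough of the $G$-action near $x$ to detect a given $1\neq g$ (or, for (2), a coherent family of potentially torsion elements). Without finite orbit one cannot transfer conclusions from $G_x$ to $G$; establishing the existence of such $x$ uniformly --- in particular for the virtual torsion-freeness statement where one must construct a single finite-index torsion-free subgroup --- is the subtle step, requiring both the finite presentation of $V_A$ and Noetherian constraints on orbit complexity.
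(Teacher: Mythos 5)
First, a point of reference: the paper does not prove Theorem \ref{bl0} at all --- it is quoted from Bass--Lubotzky \cite[Corollary (1.2)]{bl}, and the key local input is likewise only recalled as Proposition \ref{prop1}. So your proposal has to be judged against the Bass--Lubotzky argument, and against that yardstick it has genuine gaps in both parts. For part (1), the step you yourself call ``the main obstacle'' --- producing, for a given $g\neq 1$, a closed point of $V_A$ with finite $G$-orbit at which $g$ is detected --- is exactly the step you do not supply, and the ``Hilbert-scheme / orbit-intersection argument'' you gesture at is not how it goes. The mechanism is elementary: since $A$ is a finitely generated $\Z$-algebra, for each $q$ the set of closed points of $V_A$ whose residue field has at most $q$ elements is finite and invariant under \emph{every} $A$-automorphism (automorphisms preserve residue fields up to isomorphism), so every closed point automatically has finite orbit; detecting $g$ then uses density of closed points (Jacobson) together with Bass--Lubotzky's \emph{effective} finite sets of closed points (functions on charts inject into the product of the local rings at such a set) and Krull's intersection theorem. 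Your substitute ``rigidity observation'' also overreaches the stated generality: you invoke irreducibility of $V$, which is not a hypothesis of Theorem \ref{bl0} (the scheme may be reducible, non-reduced, non-separated), and even for irreducible $V$ passing from ``trivial near $x$'' to ``trivial on $V$'' needs reducedness and separatedness. (Likewise, the opening claim that spreading out yields an embedding $G\hookrightarrow\Aut_A(V_A)$ needs an argument: $G$ is only finitely generated, not finitely presented, so its relations do not all descend to a finite level, and for a general model the base-change map on automorphism groups need not be injective; one repairs this, e.g., by replacing $V_A$ with the scheme-theoretic image of $V\to V_A$.)

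For part (2) the proposed mechanism is internally inconsistent. A closed point of $V_A\otimes_\Z\Q$ has residue field of characteristic zero, hence is not a closed point of $V_A$, and its $G$-orbit need not be finite, so its stabilizer need not have finite index in $G$; conversely, at the closed points of $V_A$ that do have finite orbits the residue fields are finite, the jet automorphism groups $\Aut(\Ocal_{V_A,x}/\mfrak_x^{n+1})$ are finite groups, and Selberg's lemma gives nothing there. Moreover, even granting a finite-index subgroup $U$ with characteristic-zero jet representations $\rho_n$, a torsion-free finite-index subgroup of $\rho_n(U)$ says nothing about torsion elements lying in $\bigcap_n\ker\rho_n$; controlling torsion in that congruence kernel is precisely the hard content. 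Bass--Lubotzky (Proposition \ref{prop1} in the paper) do this by showing that any torsion element of the kernel at a finite effective set of residue characteristic $p$ has $p$-power order, and then intersecting the kernels at two effective sets of coprime residue characteristics; flatness over $\Z$ enters exactly to guarantee that closed points of two distinct residue characteristics exist. Your sketch contains neither this statement nor any substitute for it, so virtual torsion-freeness is not established.
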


We shall also need an effective form of the second part of Theorem \ref{bl0}, whose formulation we recall as follows. Let $k$ be a finitely generated subring of $\bar\Q$. Let $Z$ be a scheme flat of finite type over $k$. Given a finite set $X$ of closed points of $Z$, let
    $$A_{X}=\prod_{x\in X}\Ocal_{Z,x}\quad\text{and}\quad J_{X}=\prod_{x\in X}\mfrak_{Z,x}$$
where $\Ocal_{Z,x}$ denotes the local ring of $Z$ at $x$ and $\mfrak_{Z,x}$ is the maximal ideal of $\Ocal_{Z,x}$, with residue field $\kappa(x)=\Ocal_{Z,x}/\mfrak_{Z,x}$. We shall say that $X$ has \emph{residue characteristic $p$} if $\kappa(x)=p$ for every $x\in X$. Following \cite{bl}, we shall say that $X$ is \emph{effective} if there is a finite affine open covering $(U_i)_{i\in I}$ of $Z$ such that the natural morphism
    $\Ocal_Z(U_i)\to\prod_{x\in X\cap U_i}\Ocal_{Z,x}$
    is injective for every $i\in I$. The following holds.

\begin{proposition}
\label{prop1}
    Suppose that $X$ is a finite effective set of closed points in $Z$ with residue characteristic $p$. Then the order of any torsion element in $$\Gamma_X=\ker(\Aut(Z/k)\to\Aut Z(A_{X}/J_{X}^2))$$ is a power of $p$. In particular, if $X$ and $X'$ are finite effective sets in $Z$ having distinct residue characteristics $p\neq p'$, then $\Gamma_X\cap\Gamma_{X'}$ is a normal torsionfree subgroup of finite index in $\Aut(Z/k)$.
\end{proposition}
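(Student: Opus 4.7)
The plan is a Minkowski-style argument on each local ring $\mathcal{O}_{Z,x}$ for $x\in X$, combined with the effectiveness hypothesis to globalize. Let $\sigma\in\Gamma_X$ be a torsion element of order $n=p^a m$ with $\gcd(m,p)=1$; since it suffices to show $\sigma^{p^a}=\id$, I may assume the order $n$ is coprime to $p$. Because $\sigma$ acts as the identity on $Z(A_X/J_X^2)=\prod_{x\in X}Z(\mathcal{O}_{Z,x}/\mathfrak{m}_x^2)$, in particular on the canonical point given by the inclusions of first-order neighborhoods, $\sigma$ fixes each $x\in X$ pointwise and acts trivially on $\mathcal{O}_{Z,x}/\mathfrak{m}_x^2$.

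Next, for each $x\in X$ I would show by induction on $d\geq 2$ that $\sigma$ acts trivially on $A/\mathfrak{m}_x^d$, where $A=\mathcal{O}_{Z,x}$; the base case $d=2$ is given. For the inductive step, set $\delta(f):=\sigma(f)-f$, which lies in $\mathfrak{m}_x^d$ by hypothesis. A direct expansion (using $\sigma(\mathfrak{m}_x)=\mathfrak{m}_x$ and $2d-1\geq d+1$ for $d\geq 2$) shows $\sigma$ acts trivially on $\mathfrak{m}_x^d/\mathfrak{m}_x^{d+1}$, and iterating yields $\sigma^k(f)\equiv f+k\,\delta(f)\pmod{\mathfrak{m}_x^{d+1}}$ by induction on $k$. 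Taking $k=n$ and using $\sigma^n=\id$ gives $n\,\delta(f)\in\mathfrak{m}_x^{d+1}$; since $\mathfrak{m}_x^d/\mathfrak{m}_x^{d+1}$ is a $\kappa(x)$-vector space and the image of $n$ in $\kappa(x)$ (of characteristic $p$) is invertible, we conclude $\delta(f)\in\mathfrak{m}_x^{d+1}$, completing the induction. Krull's intersection theorem (valid since $A$ is Noetherian local) then gives $\sigma|_A=\id$.

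To globalize, let $E\subseteq Z$ be the scheme-theoretic equalizer of $\sigma$ and $\id$; since $Z$ is separated, $E$ is a closed subscheme, with coherent ideal sheaf $\mathcal{I}_E\subset\mathcal{O}_Z$. Having $\sigma|_{\mathcal{O}_{Z,x}}=\id$ for each $x\in X$ means $(\mathcal{I}_E)_x=0$. On an affine open $U_i$ from the effective covering, $\mathcal{I}_E|_{U_i}$ corresponds to a finitely generated ideal $I_i\subset\mathcal{O}_Z(U_i)$, each of whose generators maps to $0$ in $\mathcal{O}_{Z,x}$ for every $x\in X\cap U_i$. By the effectiveness hypothesis, $I_i=0$, hence $\mathcal{I}_E=0$ on each $U_i$, so $E=Z$ and $\sigma=\id$ on $Z$. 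This establishes the first assertion.

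The ``in particular'' claim now follows: any torsion element of $\Gamma_X\cap\Gamma_{X'}$ has order that is simultaneously a power of $p$ and a power of $p'\neq p$, hence is trivial. Normality is automatic since $\Gamma_X$ is the kernel of the group homomorphism $\Aut(Z/k)\to\Aut Z(A_X/J_X^2)$, and finite index follows because $Z(A_X/J_X^2)$ is finite: residue fields at closed points of $Z$ are finite (a standard consequence of $k$ being a finitely generated $\Z$-subring of $\bar{\Q}$), so $A_X/J_X^2$ is a finite ring and the set of morphisms $\Spec(A_X/J_X^2)\to Z$ for $Z$ of finite type over $k$ is finite. The main technical subtlety I foresee is the globalization step, since $\sigma$ need not preserve any individual $U_i$; casting the argument in terms of the equalizer subscheme and its coherent ideal sheaf bypasses this issue and makes the effectiveness hypothesis apply directly.
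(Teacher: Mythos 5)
Your argument is correct, and it is worth noting that the paper does not actually prove Proposition \ref{prop1}: its ``proof'' is a citation to Bass--Lubotzky \cite[pp.~4--5]{bl} (with a summary referenced in \cite{whang}). What you wrote is essentially a self-contained reconstruction of that congruence-subgroup argument: the reduction to order prime to $p$, the observation that triviality on the canonical point of $Z(A_X/J_X^2)=\prod_{x\in X}Z(\Ocal_{Z,x}/\mfrak_{Z,x}^2)$ forces $\sigma(x)=x$ and $\sigma^\sharp_x\equiv\id \bmod \mfrak_{Z,x}^2$, the Minkowski-style induction $\sigma^k(f)\equiv f+k\,\delta(f)\pmod{\mfrak_{Z,x}^{d+1}}$ combined with invertibility of $n$ in $\kappa(x)$ and Krull intersection to get $\sigma^\sharp_x=\id$ on $\Ocal_{Z,x}$, and finally effectiveness to globalize; the ``in particular'' clause, including the finite-index claim via finiteness of residue fields at closed points of a scheme of finite type over a finitely generated subring of $\bar\Q$ (so that $A_X/J_X^2$ is a finite ring and $Z(A_X/J_X^2)$ is a finite set), also checks out. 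The one point to flag is the globalization step: you invoke separatedness of $Z$ so that the equalizer $E$ of $\sigma$ and $\id$ is a closed subscheme with quasi-coherent ideal sheaf, but separatedness is not among the stated hypotheses (the section only assumes $Z$ flat of finite type over $k$). This is harmless for the paper---every application is to integral separated schemes, and your equalizer device is in fact a clean way to handle the genuine subtlety that $\sigma$ need not preserve the charts $U_i$, which is exactly where the injectivity $\Ocal_Z(U_i)\to\prod_{x\in X\cap U_i}\Ocal_{Z,x}$ is meant to be used---but to obtain the proposition in the stated generality you should either add separatedness as a hypothesis or modify this step (for non-separated $Z$ the equalizer is only locally closed), as in the original source.
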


\begin{proof}
    The proof is given in \cite[pp.4-5]{bl}. See also \cite[Section 2.1]{whang} for a summary.
\end{proof}

\subsection{Burnside problem}
We refer to \cite{reference} for a summary of the history of the Burnside problem. Here, we recall their formal statements of its variants.
\begin{problem}
    [Strong Burnside's problem]
    Let $G$ be a finitely generated group all of whose elements are torsion. Is $G$ finite?
\end{problem}
\begin{problem}
    [Bounded Burnside's problem]
    Let $G$ be a finitely generated group all of finite exponent. Is $G$ finite?
\end{problem}
\begin{problem}
    [Restricted Burnside's problem]
    Are there only finitely many finite groups with given number of generators and given exponent?
\end{problem}
While both the strong Burnside's problem and bounded Burnside's problem have negative answers in general (in the strong case by work of Golod-Shafarevich \cite{gs}, and in the bounded case by work of Adian and Novikov \cite{an}), for linear groups they admit affirmative answers (due to Schur \cite{schur} and Burnside \cite{burnside}, respectively). The restricted Burnside problem was solved affirmatively by Zelmanov \cite{z1,z2}. An immediate corollary of his work is the following characterization of finite groups.

\begin{theorem}
    \label{zelmanov}
    A group $G$ is finite if and only if it is finitely generated, residually finite, and of finite exponent.
\end{theorem}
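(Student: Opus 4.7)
The forward direction is immediate: a finite group is generated by itself, contains only itself as a finite-index normal subgroup (so is trivially residually finite), and has exponent dividing its order. So the task is to prove the converse. Assume $G$ is generated by $d$ elements, residually finite, and of finite exponent $n$.

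My plan is to use Zelmanov's theorem in the form: for fixed $d$ and $n$, there are only finitely many finite groups with $d$ generators of exponent dividing $n$. In particular, there exists an integer $N=N(d,n)$ bounding the order of any such finite group. Every finite quotient $G/H$ of $G$ is $d$-generated (by the images of the generators) and has exponent dividing $n$, so $[G:H]\leq N$.

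Next I invoke the classical fact that a finitely generated group has only finitely many subgroups of any given finite index: a subgroup of index $m$ corresponds to a transitive action on $m$ points, i.e.~a homomorphism $G\to S_m$ together with a choice of point, and there are finitely many such homomorphisms since $G$ is finitely generated. Applying this with $m\leq N$, there are finitely many finite-index subgroups of $G$, hence in particular finitely many finite-index \emph{normal} subgroups $H_1,\dots,H_r$. Then $K=H_1\cap\dots\cap H_r$ has finite index in $G$ and equals the intersection of all finite-index normal subgroups of $G$. By residual finiteness, $K=\{e\}$, so $G$ is finite.

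The only nontrivial ingredient is Zelmanov's theorem itself; once it is in hand, the remaining steps are standard and pose no real obstacle. The structure of the argument is the classical one showing that the restricted Burnside problem implies the characterization of finite groups given here, and no characteristic-zero or geometric input is needed for this result.
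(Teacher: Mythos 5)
Your argument is correct and is precisely the standard deduction the paper has in mind when it calls this statement an immediate corollary of Zelmanov's resolution of the restricted Burnside problem: the paper cites \cite{z1,z2} and gives no further details, and your use of the bound $N(d,n)$ on finite $d$-generated quotients of exponent $n$, together with M.~Hall's finiteness of subgroups of bounded index and residual finiteness, supplies exactly those omitted details. No discrepancy with the paper's approach.
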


In the meanwhile, analogues of Burnside's problems for other algebraic structures such as semigroups have been studied. In \cite{mz}, McNaughton-Zalcstein \cite{mz} established the analogue of the strong Burnside's problem for semigroups (or monoids) of matrices over arbitrary fields. Theorem \ref{mainthm1} serves to generalize this result, in characteristic zero, to categories of algebraic varieties.

\section{Proofs of Theorems \ref{mainthm0} and \ref{mainthm1}}\label{sect:3}

%\begin{corollary}
%\label{check}
%Let $f$ be an explicitly given dominant endomorphism of an irreducible algebraic variety $V$ over a number field $k$. Then it is decidable whether or not $f$ has finite order.
%\end{corollary}

%\begin{proof}
%Let $A$ be a finitely generated subring of $k$ over which the scheme $V$ and the automorphism $f$ spread. By Corollary \ref{blcor} by Lemma \ref{finite-order}, if $f$ is to have finite order then $f$ must be an automorphism with an effectively determined upper bound on the order. For each $n\geq1$, it is decidable whether or not %$f^n=\id_V$ as it reduces to the ideal membership problem.
%\end{proof}

\subsection{Explicitly given systems of varieties}
Here, we make precise our notion of an explicitly given finite systems of algebraic varieties, generally following the spirit of the paragraph after \cite[Theorem 1.2]{whang}.

\begin{enumerate}
    \item A finitely presented ring $k$ is explicitly given if it is given as a quotient of a polynomial ring with coefficients in $\Z$, and a finite set of generators for the kernel of the quotient map is specified. A ring homomorphism between explicitly given finitely presented rings is explicitly given if the images of the generators (given by the explicit finite presentation) of the domain ring are specified. In what follows, let $k$ be an explicitly given finitely presented ring.
    \item An affine scheme of finite presentation over $k$ is explicitly given if it is the spectrum of an explicitly given $k$-algebra. A morphism between two explicitly given affine schemes of finite presentation over $k$ is explicitly given if it is induced by an explicitly given $k$-algebra homomorphism between their coordinate rings.
    \item A scheme $V$ separated of finite presentation over $k$ is explicitly given if it is written as an explicit finite union of explicitly given open affine schemes $U_i$ with affine overlaps $U_i\cap U_j$, such that the gluing morphisms $U_i\cap U_j\to U_i$ are explicitly given. We shall call $(U_i)$ an effective presentation of $V$.
    \item Let $V$ and $W$ be explicitly given schemes of finite presentation over $k$, with effective presentations $(U_i)_{i\in I}$ and $(T_{j})_{j\in J}$ respectively. A morphism $f:V\to W$ over $k$ is explicitly given if there is another effective presentation $(U_i')_{i\in I'}$ of $V$ and an explicit function $j:I'\to J$ such that $f(U_i')\subseteq T_{j(i)}$ for every index $i$, and the following hold:
    \begin{enumerate}
        \item $f|_{U_i'}:U_i'\to T_{j(i)}$ is an explicitly given morphism for all $i\in I$, and
        \item the inclusions of $U_i'\cap U_k$ into $U_i'$ and $U_k'$ are explicitly given morphisms for all $(i,k)\in I'\times I$.
    \end{enumerate}
\end{enumerate}
Finally, a finite system $S$ of algebraic varieties over a field $K$ is explicitly given if there is an explicitly given finitely presented domain $k\subset K$ and a finite system $S_k$ of schemes over $k$, whose vertices are explicitly given separated schemes of finite type over $k$ and whose arrows are explicitly given morphisms between those schemes, such that $S$ is obtained from $S_k$ by base change.

\subsection{Decidability of finiteness} Here, we shall prove Theorem \ref{mainthm0}. Let us begin with a lemma.

\begin{lemma}
\label{finite-order}
Let $X$ be an integral separated scheme. If $f$ is a dominant endomorphism of $X$ of finite order, then $f$ is an automorphism.
\end{lemma}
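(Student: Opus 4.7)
The plan is to show that the assumption $f^n = f^m$ for some integers $0 \le m < n$ (which is what ``finite order'' means in the sense of this paper, since the monoid $\langle f\rangle$ being finite forces such a relation) implies $f^{n-m}=\id_X$; once this is established, $f^{n-m-1}$ is a two-sided inverse to $f$, and we are done.

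First, because $X$ is integral with generic point $\eta$ and $f$ is dominant, $f$ sends $\eta$ to itself and induces a field endomorphism $f^{\ast} : K(X) \to K(X)$ on the function field $K(X) = \Ocal_{X,\eta}$. Every field homomorphism is injective, so each iterate $(f^{\ast})^k$ is injective. Pulling back the relation $f^n = f^m$ gives $(f^{\ast})^m\circ (f^{\ast})^{n-m} = (f^{\ast})^m$; cancelling the injective map $(f^{\ast})^m$ on the left yields $(f^{\ast})^{n-m} = \id_{K(X)}$.

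Set $g = f^{n-m}$; this is a dominant endomorphism of $X$ whose pullback on $K(X)$ is the identity, and I would then show $g = \id_X$ as morphisms of schemes. For this, consider the equalizer $E\to X$ of $g$ and $\id_X$, realized as the fibre product of $(g,\id_X):X\to X\times X$ against the diagonal $\Delta:X\to X\times X$. Because $X$ is separated, $\Delta$ is a closed immersion, so $E\hookrightarrow X$ is also a closed immersion. On underlying sets $E = \{x\in X : g(x)=x\}$, and the generic point $\eta$ lies in $E$: dominance gives $g(\eta)=\eta$, and the induced map on the local ring $\Ocal_{X,\eta} = K(X)$ is the identity by construction. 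Since $E$ is closed in $X$ and $\overline{\{\eta\}} = X$, the underlying space of $E$ is all of $X$; because $X$ is reduced, any closed subscheme of $X$ with full underlying space must be cut out by an ideal sheaf contained in the nilradical of $\Ocal_X$, hence by the zero ideal. Therefore $E = X$ scheme-theoretically and $g=\id_X$.

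The main obstacle is the passage from equality on function fields to equality as morphisms of schemes; this is exactly where the hypotheses of the lemma are used, namely separatedness to close up the equalizer and integrality (more precisely, reducedness) to rule out any infinitesimal discrepancy supported on all of $X$.
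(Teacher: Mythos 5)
Your proposal is correct and follows essentially the same route as the paper: pass to the function field via dominance, cancel the injective pullback to get $(f^{n-m})^*=\id_{K(X)}$, and then use separatedness together with reducedness to upgrade this to $f^{n-m}=\id_X$. The only difference is cosmetic --- you make the last step explicit via the scheme-theoretic equalizer of $f^{n-m}$ and $\id_X$, where the paper merely cites agreement on a dense open subscheme --- so there is nothing to correct.
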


\begin{proof}
Let $k(X)$ be the function field of $X$. Since $f$ is dominant, it induces an inclusion $f^*:k(X)\to k(X)$. Since there exist $0\leq m<n$ such that $f^m=f^n$, in fact $f^*$ must induce an automorphism of $k(X)$, and $(f^{n-m})^*$ is the identity on $k(X)$. This implies that $f^{n-m}$ is the identity on a dense open subscheme of $X$. Since $X$ is separated, this implies that $f^{n-m}=\id_X$.
\end{proof}

\begin{proposition}
\label{dominant}
Let $k$ be a finitely presented domain of characteristic zero. Let $S$ be a finite path-connected system of dominant morphisms between integral separated schemes of finite type over $k$. If $\langle S\rangle$ is finite, then $|\langle S\rangle|\leq C(S_0)$ where $C(S_0)$ is a constant that only depends on $S_0$.
\end{proposition}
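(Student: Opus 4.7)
The plan is to show that every arrow of $\langle S\rangle$ is birational, use this to inject the whole category into the endomorphism group of a single chosen object, and then bound that group via an effective Bass--Lubotzky statement.

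\textbf{Step 1: birationality of every arrow.} Compositions of dominant morphisms are dominant, so every arrow of $\langle S\rangle$ is dominant. Given $\phi : Y \to Z$ in $\langle S\rangle$, path-connectedness provides $\psi : Z \to Y$ in $\langle S\rangle$. Since $\langle S\rangle$ is finite, $\psi\circ\phi \in \End_{\langle S\rangle}(Y)$ has finite order and so, by Lemma~\ref{finite-order}, is an automorphism of $Y$. Passing to function fields, $\phi^*\circ\psi^* = (\psi\circ\phi)^*$ is an automorphism of $k(Y)$; combined with the injectivity of $\phi^*$ and $\psi^*$ from dominance, this forces each of $\phi^*$ and $\psi^*$ to be a field isomorphism. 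Hence every arrow of $\langle S\rangle$ is birational.

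\textbf{Step 2: reduction to a single endomorphism group.} Fix $Y_0 \in S_0$, and for each $Y \in S_0$ choose arrows $\phi_Y : Y_0 \to Y$ and $\psi_Y : Y \to Y_0$ in $\langle S\rangle$. For $Y, Z \in S_0$, consider
\[
\Phi_{Y,Z} : \Hom_{\langle S\rangle}(Y, Z) \to \End_{\langle S\rangle}(Y_0), \qquad f \mapsto \psi_Z\circ f\circ\phi_Y.
\]
If $\Phi_{Y,Z}(f) = \Phi_{Y,Z}(g)$, then $\phi_Y^*\circ f^*\circ\psi_Z^* = \phi_Y^*\circ g^*\circ\psi_Z^*$ on $k(Y_0)$; cancelling the isomorphisms $\phi_Y^*$ and $\psi_Z^*$ gives $f^* = g^*$, whence $f = g$ by integrality of $Y$ and separatedness of $Z$. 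Summing over pairs $(Y, Z)\in S_0\times S_0$ yields
\[
|\langle S\rangle| \leq |S_0|^2\cdot|\End_{\langle S\rangle}(Y_0)|,
\]
reducing the problem to bounding $|\End_{\langle S\rangle}(Y_0)|$ by a constant depending only on $Y_0$.

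\textbf{Step 3: bounding $\End_{\langle S\rangle}(Y_0)$ via Bass--Lubotzky.} By Lemma~\ref{finite-order} again, $\End_{\langle S\rangle}(Y_0)$ is a finite subgroup of $\Aut(Y_0/k)$. Since $k$ is a finitely presented $\Z$-algebra of characteristic zero, it is Jacobson with maximal ideals of varying residue characteristics; spreading out accordingly, one can select finite effective sets $X, X'$ of closed points of $Y_0$ with distinct residue characteristics $p \neq p'$. Proposition~\ref{prop1} then produces a torsionfree normal subgroup $\Gamma = \Gamma_X\cap\Gamma_{X'}$ of $\Aut(Y_0/k)$ of finite index $I(Y_0)$ depending only on $Y_0$ together with the intrinsic choices of $X, X'$. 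Any finite subgroup of $\Aut(Y_0/k)$ meets the torsionfree $\Gamma$ trivially and therefore injects into $\Aut(Y_0/k)/\Gamma$, so $|\End_{\langle S\rangle}(Y_0)| \leq I(Y_0)$, and $C(S_0) := |S_0|^2\cdot I(Y_0)$ satisfies the required conclusion.

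\textbf{Main obstacle.} The delicate point is the precise invocation of Proposition~\ref{prop1}, stated for $k$ a finitely generated subring of $\bar\Q$, whereas our $k$ is only a finitely presented domain of characteristic zero (possibly of positive transcendence degree over $\Q$). A secondary worry is the flatness hypothesis on $Y_0$ required by the set-up of Proposition~\ref{prop1}. Both should be routine: the Bass--Lubotzky proof only uses the existence of closed points with prescribed residue characteristics (available for any finitely generated $\Z$-algebra of characteristic zero), and generic flatness lets us shrink $\Spec k$ to an open where $Y_0$ is flat; alternatively, one descends $Y_0$ along a specialization $\Spec \bar\Q \to \Spec k$ (which exists in characteristic zero) to a finitely generated subring of $\bar\Q$ and applies Proposition~\ref{prop1} there.
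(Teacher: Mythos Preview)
Your proof is correct and follows essentially the same route as the paper: use Lemma~\ref{finite-order} to reduce to automorphisms, collapse the category to a single endomorphism group, and bound that group via Proposition~\ref{prop1}. The only cosmetic difference is that the paper observes directly that $\langle S\rangle$ is a groupoid (since $\psi\phi$ and $\phi\psi$ being automorphisms makes every $\phi$ invertible, not merely birational) and then uses the torsor structure of $\Hom_{\langle S\rangle}(Z,W)$ under $\langle S\rangle_Z$, whereas you pass through function fields and an explicit injection; your extra care about the hypothesis on $k$ in Proposition~\ref{prop1} is a point the paper itself glosses over.
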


\begin{proof}
Let $S$ be a finite system as in the statement of the proposition. By Lemma \ref{finite-order} and our assumptions on $S$, every endomorphism of an object in $\langle S\rangle$ is an automorphism of finite order. It follows that $\langle S\rangle$ is a groupoid. Fix $Z\in S_0$. For every $W\in S_0$, the set $\Hom_{\langle S\rangle}(Z,W)$ is a torsor under the group $\langle S\rangle_Z$, so we have
$|\langle S\rangle|=|\langle S\rangle_Z|^{|S_0|}$.
Now, $\langle S\rangle_Z$ is a finite subgroup of $\Aut(Z/k)$. Fix two closed points $x$ and $x'$ of the integral scheme $Z$ such that the characteristics of the residue fields $\kappa(x)$ and $\kappa(x')$ are coprime. Setting $X=\{x\}$ and $X'=\{x'\}$, we see by Proposition \ref{prop1} that the composition of group homomorphisms
$$\langle S\rangle_Z\to \Aut(Z/k)\to\Aut Z(A_X/J_X^2)\times \Aut(A_X/J_X^2)$$
is injective. Since the right hand side only depends on $Z$, we are done.
\end{proof}

\begin{thm}{\ref{mainthm0}}
Let $k$ be a field of characteristic zero. There exists an algorithm to determine, given an explicit finite system of irreducible algebraic varieties over $k$, whether or not the category it generates is finite.
\end{thm}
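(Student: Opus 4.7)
The plan is to produce an algorithm which, given an explicit finite system $S$, enumerates the morphisms of $\langle S\rangle$ by successively composing generators and terminates either when the enumeration stabilizes (in which case $\langle S\rangle$ is finite) or when the count of distinct morphisms exceeds an \emph{a priori} effectively computable bound $B(S)$ (in which case $\langle S\rangle$ is infinite). Since equality of two explicitly given morphisms between explicitly given schemes is decidable, the enumeration is effective. By Lemma \ref{connected}, and because the path-components of an explicit finite quiver are computable, I would first reduce to the case where $S$ is path-connected.

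I would then construct $B(S)$ by induction on a complexity measure such as $c(S)=\sum_{X\in S_0}\dim X$. In the base situation, when every arrow of $S$ is dominant --- a property one can check algorithmically by examining the induced map at generic points --- path-connectedness forces all objects to share a common dimension, and Lemma \ref{finite-order} ensures that every endomorphism of $\langle S\rangle$ is an automorphism whenever $\langle S\rangle$ is finite. Hence $\langle S\rangle$ is a groupoid, and Proposition \ref{dominant} delivers the bound $B(S)=C(S_0)$ directly, with $C(S_0)$ explicitly computable from the orders of the finite automorphism groups of reductions modulo squared maximal ideals at pairs of closed points of distinct residue characteristics.

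For the inductive step, suppose some arrow $f\colon X\to Y$ of $S$ is not dominant, and let $Z=\overline{f(X)}\subsetneq Y$, an integral closed subvariety with $\dim Z<\dim Y$. Factoring $f=\iota\circ f'$ with $f'\colon X\to Z$ dominant and $\iota\colon Z\hookrightarrow Y$ a closed immersion, and adjoining $Z$ as a new vertex, produces an enlarged system generating the same category $\langle S\rangle$ but with the non-dominant part of $f$ localized onto a strictly lower-dimensional subvariety. Iterating this procedure for every non-dominant arrow terminates in finitely many steps and allows one to decompose the finiteness question for $\langle S\rangle$ into finiteness questions for subsystems of strictly smaller complexity, to which the inductive hypothesis applies; the bounds produced by the inductive hypothesis then combine into the desired bound $B(S)$.

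The main obstacle will be choosing the complexity measure and the decomposition carefully so that the reduction genuinely decreases it: the image factorization alone lowers neither $|S_0|$ nor the number of non-dominant arrows, so a naive measure fails. The correct viewpoint is that each non-dominant arrow factors through a subvariety of strictly smaller dimension, so the dimensions of these images furnish a well-founded induction parameter, and the stratification of $\langle S\rangle$ by such images must be arranged so that the bounds for the strata reassemble into an effective bound for $\langle S\rangle$.
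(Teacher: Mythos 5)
Your base case (all arrows dominant) matches the paper: Lemma \ref{finite-order} makes $\langle S\rangle$ a groupoid when it is finite, Proposition \ref{dominant} gives the computable bound $C(S_0)$, and enumeration with decidable equality of morphisms settles that case. The genuine gap is the inductive step, and you have in effect flagged it yourself without closing it. Factoring a non-dominant arrow $f\colon A\to B$ as $\iota\circ f'$ through $Z=\overline{f(A)}$ and adjoining $Z$ as a new vertex reduces nothing: the closed immersion $\iota$ is itself non-dominant, $|S_0|$ and $|S_1|$ grow, and $\sum_{X\in S_0}\dim X$ increases by $\dim Z$; asserting that ``the dimensions of these images furnish a well-founded induction parameter'' and that the ``strata reassemble'' is precisely the content that remains to be proved, and it is the heart of the argument. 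The paper closes this gap with a different decomposition and a double induction on $(|S_1|,\max_{V\in S_0}\dim V)$: delete $f$ to form $S'$ with $S_1'=S_1\setminus\{f\}$ (strictly fewer arrows) and decide $\langle S'\rangle$ recursively; if it is finite, form the system $S''$ with the single vertex $Z$ and arrows $(f\circ g)|_Z$ for $g\in\Hom_{\langle S'\rangle}(B,A)$, a finite computable set of endomorphisms of a variety of strictly smaller dimension, and decide $\langle S''\rangle$ recursively. The key combinatorial fact your proposal lacks is that every morphism of $\langle S\rangle$ not lying in $\langle S'\rangle$ factors as $h_2\circ e\circ f\circ h_1$ with $h_1\in\Hom_{\langle S'\rangle}(V,A)$, $e\in\langle S''\rangle_1$, and $h_2\in\Hom_{\langle S'\rangle}(B,W)$; this is what lets finiteness of $\langle S'\rangle$ and $\langle S''\rangle$ imply finiteness of $\langle S\rangle$, while the converse implications are immediate.

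A secondary structural point: your plan to compute a single a priori bound $B(S)$ and only then enumerate cannot be extracted directly from this argument, because $S''$ (and hence any bound attached to it) can only be written down after $\langle S'\rangle$ has been computed and found finite; the paper's procedure is an interleaved recursive decision algorithm, not ``bound first, enumerate second.'' One could in principle reorganize it into your format, but as written your proposal defers both the well-founded induction and the reassembly of bounds, which are exactly the steps that make the theorem nontrivial beyond the dominant case.
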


\begin{proof}
Let $S$ be a finite system of irreducible algebraic varieties over $\bar\Q$. By spreading out, we can explicitly determine a finitely generated subring of $\bar\Q$ such that $S$ descends to a finite system of integral separated schemes flat of finite type over $k$. We shall proceed by induction on $|S_1|$ and $\max_{V\in S_0}\dim V$. By Lemma \ref{connected}, we may assume that $S$ is path-connected. If every $f\in S_1$ is dominant, then by Proposition \ref{dominant} it is decidable whether or not $\langle S\rangle$ is finite. So assume that some $f\in S_1$ is not dominant, and write $A,B\in S_0$ for the source and target of $f$ respectively. Let $Z$ be the Zariski closure of the image of $f$ in $B$, equipped with the reduced closed subscheme structure. Note that $Z$ is integral, separated, and flat of finite type over $k$. Consider the finite systems $S'$ and $S''$ where:
\begin{itemize}
    \item[(a)] $S_0'=S_0$ and $S_1'=S_1\setminus\{f\}$, and
    \item[(b)] $S_0''=\{Z\}$ and $S_1''=\{(f\circ g)|_Z:g\in\Hom_{\langle S'\rangle }(B,A)\}$.
\end{itemize}
Since $|S_1'|<|S_1|$, by inductive hypothesis it is decidable whether or not $\langle S'\rangle$ is finite, so we may assume it is. This implies that $|S_1''|$ is finite, and since $Z$ has smaller dimension than $B$, by inductive hypothesis it is decidable whether or not $\langle S''\rangle$ is finite. We may assume $\langle S''\rangle$ is finite. Then for every $V,W\in S_0$, if $g\in\Hom_{\langle S\rangle}(V,W)$ then either $g\in\Hom_{\langle S'\rangle}(V,W)$ or
$$g=h_2\circ e\circ f\circ h_1$$
for some $h_1\in\Hom_{\langle S'\rangle}(V,A)$, $e\in \langle S''\rangle_1$, and $h_2\in\Hom_{\langle S'\rangle}(B,W)$. It follows that $\langle S\rangle$ is finite. This completes the induction and the proof.
\end{proof}

\subsection{Burnside finiteness criterion} We now prove the following extended version of Theorem \ref{mainthm1}, by adapting our proof of Theorem \ref{mainthm0}.

\begin{theorem}
\label{extmainthm1}
Let $k$ be a field. Let $C$ be a finitely generated category of irreducible algebraic varieties over $k$. Then $C$ is finite if and only if:
\begin{enumerate}
    \item $C$ is $n$-torsion for some $n\geq1$, or
    \item $C$ is torsion and $k$ has characteristic zero.
\end{enumerate}
\end{theorem}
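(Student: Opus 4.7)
The plan is to adapt the inductive argument used to prove Theorem \ref{mainthm0}, substituting its algorithmic case analysis by two finiteness theorems from Section \ref{sect:2}. The implication from finiteness to $n$-torsion is trivial (take $n=|C|$), so the work is in the converse. Fix a finite generating system $S$ for $C$. By spreading out, I may assume $S$ descends to a system of integral separated schemes flat of finite type over a finitely generated subdomain $k_0\subseteq k$. Using Lemma \ref{connected} I reduce to $S$ being path-connected, and then induct lexicographically on $(|S_1|,\max_{V\in S_0}\dim V)$.

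For the dominant base case, suppose every arrow of $S$ is dominant. Then every endomorphism in $\langle S\rangle$ is dominant, and by Lemma \ref{finite-order} combined with the torsion hypothesis is an automorphism; path-connectedness then upgrades every morphism of $\langle S\rangle$ to an isomorphism, so $\langle S\rangle$ is a groupoid. As in the proof of Proposition \ref{dominant}, finiteness of $\langle S\rangle$ is equivalent to finiteness of the vertex group $\langle S\rangle_Z$ for any chosen $Z\in S_0$, and a spanning-tree argument shows this group is finitely generated. In case (2), choosing two closed points of $Z$ of distinct residue characteristics (available since $k_0\subseteq\bar\Q$), Proposition \ref{prop1} supplies a torsion-free normal subgroup $N\leq\Aut(Z/k_0)$ of finite index; since $\langle S\rangle_Z$ is torsion, its intersection with $N$ is trivial and $\langle S\rangle_Z$ embeds into the finite quotient $\Aut(Z/k_0)/N$. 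In case (1), Theorem \ref{bl0}(1) gives residual finiteness of $\langle S\rangle_Z$, and Zelmanov's Theorem \ref{zelmanov} applied to this finitely generated, residually finite group of exponent at most $n$ yields finiteness.

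For the inductive step, if some $f:A\to B$ in $S_1$ is not dominant, form $S'$ and $S''$ exactly as in the proof of Theorem \ref{mainthm0}, where $S'$ removes $f$ and $S''$ lives on $Z=\overline{f(A)}$ (with its reduced closed subscheme structure) with arrows $(f\circ g)|_Z$ for $g\in\Hom_{\langle S'\rangle}(B,A)$. Then $\langle S'\rangle\subseteq\langle S\rangle\subseteq C$ inherits the relevant Burnside property. For $\langle S''\rangle$, every morphism has the form $h|_Z$ for some $h\in\End_{\langle S\rangle}(B)\subseteq\End_C(B)$, and since $(h|_Z)^m=(h^m)|_Z$ for every $m\geq 0$, the torsion (resp.~$n$-torsion) condition on $h$ transfers to $h|_Z$. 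The inductive hypothesis therefore applies to $S'$ (fewer arrows) and $S''$ (smaller maximal dimension), giving that $\langle S'\rangle$ and $\langle S''\rangle$ are both finite; the final factorization step of the proof of Theorem \ref{mainthm0} then yields finiteness of $\langle S\rangle$.

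The main obstacle, aside from invoking Proposition \ref{prop1} and Zelmanov's theorem as black boxes, is this inheritance check in the inductive step: the sole object $Z$ of $\langle S''\rangle$ need not belong to $C_0$, so the Burnside hypothesis on $C$ has to be transferred through the restriction map $h\mapsto h|_Z$, which the identity $(h|_Z)^m=(h^m)|_Z$ makes straightforward.
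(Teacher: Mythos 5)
Your proposal is correct and follows essentially the same route as the paper: reduce to a path-connected finite system, handle the all-dominant case by showing $\langle S\rangle$ is a finitely generated groupoid and applying Bass--Lubotzky (residual finiteness plus Zelmanov for the $n$-torsion case, virtual torsion-freeness for the characteristic-zero torsion case), and otherwise run the same $S'$, $S''$ reduction as in Theorem \ref{mainthm0}, with your restriction identity $(h|_Z)^m=(h^m)|_Z$ supplying the inheritance of the Burnside hypothesis that the paper dismisses as clear. Two small corrections: the induction must be lexicographic with $\max_{V\in S_0}\dim V$ first (as stated, $(|S_1|,\max\dim)$ is not well-founded, since $S''$ may have far more arrows than $S$); and the spreading-out ring $k_0$ need not lie in $\bar\Q$ for general $k$ of characteristic zero--what is actually needed, and what the paper invokes via Theorem \ref{bl0}(2), is flatness over $\Z$, which still yields closed points of distinct residue characteristics and the virtually torsion-free conclusion.
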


\begin{proof}
The ``only if'' direction is clear, so we shall focus on the ``if'' direction. Let $S$ be a finite system generating $C$. As in the proof of Theorem \ref{mainthm0}, we proceed by induction on $|S_1|$ and $\max_{V\in S_0}\dim V$. By Lemma \ref{connected}, we may assume that $S$ is path-connected. First, suppose that every $f\in S_1$ is dominant. Since $C$ is torsion, arguing as in the proof of Proposition \ref{dominant}, we see that $C$ is a finitely generated groupoid, and hence its finiteness is equivalent to the finiteness of the finitely generated group $C_V$ for some (or any) $V\in C_0$. Now, we have the following.
\begin{enumerate}
    \item If $C$ is $n$-torsion for some $n\geq1$, then since the group $C_V$ is residually finite by Theorem \ref{bl0}(1), it is finite by the resolution of the restricted Burnside problem (Theorem \ref{zelmanov}).
    \item If $C$ is torsion and $k$ has characteristic zero, then since $C_V$ is virtually torsionfree by Theorem \ref{bl0} it is finite.
\end{enumerate}
This proves the theorem in the case where every $f\in S_1$ is dominant. If some $f\in S_1$ is not dominant, then we argue as in the proof of Theorem \ref{mainthm0} to construct systems $S'$ and $S''$ with smaller complexity, such that finiteness of $C'=\langle S'\rangle$ and $C''=\langle S''\rangle$ imply the finiteness of $C$. If $C$ satisfies condition (1) or (2) of the theorem, then clearly $C'$ and $C''$ also satisfy the same condition. Thus, $C'$ and $C''$ are finite by inductive hypothesis, and hence $C$ is finite.
\end{proof}

\section{Dynamical corollaries}\label{sect:4}
We record some dynamical corollaries of Theorem \ref{extmainthm1}, by transferring properties of monoids to their orbits. The following is a refined version of Corollary \ref{cor1}.

\begin{corollary}
\label{orbit}
Let $k$ be a field. Let $M$ be a finitely generated monoid acting on an algebraic variety $V/k$. Let $x\in V(k)$. The following are equivalent.
\begin{enumerate}
    \item $M\cdot x$ is finite.
    \item There exists $n\geq1$ with $|N\cdot x|\leq n$ for every $2$-generated submonoid $N\leq M$.
    \item There exists $n\geq1$ with $|\langle f\rangle\cdot g(x)|\leq n$ for every $f,g\in M$.
\end{enumerate}
\end{corollary}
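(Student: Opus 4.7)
The plan is to deduce $(3)\Rightarrow(1)$ by constructing a finitely generated category of integral varieties out of the dynamics and appealing to Theorem \ref{extmainthm1}(1); the chain $(1)\Rightarrow(2)\Rightarrow(3)$ is immediate from $\langle f\rangle\cdot g(x)\subseteq \langle f,g\rangle\cdot x\subseteq M\cdot x$. Fix a finite generating set $f_1,\dots,f_r$ for $M$. Let $X\subseteq V$ be the Zariski closure of $M\cdot x$ with reduced induced structure, and let $X_1,\dots,X_s$ be its (finitely many) irreducible components. Since $X$ is $M$-invariant and each $\overline{f_i(X_j)}$ is integral and contained in $X$, it lies in some $X_{k(i,j)}$; reducedness of $X_j$ then promotes the set-theoretic restriction to a scheme morphism $\tilde f_{ij}\colon X_j\to X_{k(i,j)}$. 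Let $C$ be the subcategory of irreducible algebraic varieties over $k$ generated by the finite system $S$ with $S_0=\{X_1,\dots,X_s\}$ and $S_1=\{\tilde f_{ij}\}$.

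The core step is to show that $(3)$ with constant $n$ forces $C$ to be $n$-torsion. Any endomorphism $\phi\in\End_C(X_j)$ has the form $f|_{X_j}$ for some $f\in M$ with $f(X_j)\subseteq X_j$. For each pair $0\le a<b\le n$, let $E_{a,b}\subseteq X_j$ denote the equalizer subscheme of $\phi^a$ and $\phi^b$, which is closed since $X_j$ is separated. The key geometric input is that $M\cdot x\cap X_j$ is Zariski dense in $X_j$: the open set $U_j=X_j\setminus\bigcup_{l\ne j}X_l$ is nonempty open in $X$ and dense in $X_j$, and density of $M\cdot x$ in $X$ yields density of $M\cdot x\cap U_j$ in $U_j$, hence in $X_j$. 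Writing each such $y$ as $y=h(x)$ and applying $(3)$ to $(f,h)$ produces $0\le a(y)<b(y)\le n$ with $f^{a(y)}(y)=f^{b(y)}(y)$, i.e.~$y\in |E_{a(y),b(y)}|$. Taking Zariski closures, $X_j=\bigcup_{0\le a<b\le n}|E_{a,b}|$, and irreducibility of $X_j$ forces $X_j=E_{a,b}$ as reduced closed subschemes for some pair. Hence $\phi^a=\phi^b$ in $C$, so $\langle\phi\rangle$ has order at most $n$.

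By Theorem \ref{extmainthm1}(1), the finitely generated $n$-torsion category $C$ is finite. Since $x$ lies in some $X_{j_0}$, every element of $M\cdot x$ equals $\psi(x)$ for some $\psi\in\bigsqcup_{j}\Hom_C(X_{j_0},X_j)$, so finiteness of $C$ gives finiteness of $M\cdot x$. The main obstacle I foresee is the scheme-theoretic bookkeeping around the restrictions $\tilde f_{ij}$ and the density step: one must verify carefully that the $k$-pointwise density of $M\cdot x\cap X_j$ propagates to topological density of $\bigcup|E_{a,b}|$ in $X_j$ before irreducibility can be invoked. Once these points are cleanly handled, the rest is a clean combination of Noetherianity, pigeonhole, and Theorem \ref{extmainthm1}.
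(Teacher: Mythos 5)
Your proposal is correct and follows essentially the same route as the paper: take the Zariski closure of $M\cdot x$, form the finite system on its irreducible components with the restricted generators, use density of the orbit plus irreducibility and separatedness (via the equalizer subschemes) to show the generated category is $n$-torsion, and then invoke Theorem \ref{extmainthm1}(1) to get finiteness of the category and hence of the orbit. Your write-up just makes explicit the density and equalizer bookkeeping that the paper's proof leaves implicit.
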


\begin{proof}
    It is clear that $(1)\implies (2)\implies (3)$, so it remains to show $(3)\implies (1)$. Suppose $(3)$ holds. Let $Z$ be the Zariski closure of $M\cdot x$ in $V$, equipped with the reduced closed subscheme structure. Let us now construct a finite system $S$ of irreducible varieties, where the elements of $S_0$ are the irreducible components of $Z$ and the elements of $S_1$ are the restrictions of the elements of a finite generating set of $M$ to the irreducible components of $Z$. Let $C=\langle S\rangle$. If $f\in C_1$ is any endomorphism of an object $A$ in $C$, we claim that $f$ has finite order $\leq n$, i.e., $C$ is $n$-torsion. Indeed, there exists some $0\leq m<n$ such that $A(k)$ contains a Zariski dense set $T$ of points satisfying $f^n(y)=f^m(y)$ for all $y\in T$. Since $A$ is integral and separated, it follows that $f^n=f^m$ on $A$. Thus $C$ is $n$-torsion, and by Theorem \ref{extmainthm1} it follows that $C$ is finite. But then the orbit of $x$ must be finite.
\end{proof}

\begin{thm}{\ref{finorb}}
Let $k$ be a field of characteristic zero.
Let $M$ be a finitely generated monoid acting on an algebraic variety $V/k$. Let $x\in V(k)$. Then the following are equivalent:
\begin{enumerate}
    \item [(a)] $x$ is $M$-periodic.
    \item [(b)] $x$ is $N$-periodic for every $2$-generated submonoid $N\leq M$.
\end{enumerate}
If moreover $M$ is a group, then the above are equivalent to:
\begin{enumerate}
    \item [(c)] $x$ is $\langle f\rangle$-periodic for every $f\in M$.
\end{enumerate}
\end{thm}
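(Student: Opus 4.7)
Plan: The implications (a) $\Rightarrow$ (b) $\Rightarrow$ (c) are immediate by restricting the action to submonoids. The permutation condition transfers because each element of a submonoid $N\leq M$, being a bijection of $M\cdot x$, restricts to a bijection of the $N$-invariant subset $N\cdot x\subseteq M\cdot x$; and (b) $\Rightarrow$ (c) because $\langle f\rangle$ is in particular $2$-generated. The substance lies in the reverse implications.

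For (b) $\Rightarrow$ (a), I would first show that $M\cdot x$ is finite, and then invoke \cite[Theorem 1.2]{whang} to upgrade this to full $M$-periodicity. For the finiteness, I would repeat the strategy of the proof of Corollary \ref{orbit}: form the system $S$ whose vertices are the irreducible components of $Z := \overline{M\cdot x}$ (with reduced structure) and whose arrows are the restrictions to these components of a fixed finite generating set of $M$, and set $C := \langle S\rangle$. The goal is to show that $C$ is torsion; then Theorem \ref{extmainthm1}(2), which uses the characteristic zero hypothesis, forces $C$ to be finite, and hence $M\cdot x$ finite. For any endomorphism $\phi = g|_A$ of an irreducible component $A$ in $C$ (where $g\in M$), hypothesis (b) applied to $N = \langle g, h\rangle$ for each $h\in M$ with $h(x)\in A$ shows that $g$ acts as a permutation of the finite set $N\cdot x$; in particular, the Zariski-dense subset $T := A\cap (M\cdot x)$ of $A$ consists of periodic points of $\phi$.

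The main obstacle is converting ``$\phi$ has a Zariski-dense set of periodic points'' into ``$\phi$ has finite order''. This implication is false in general --- for instance, $\phi\colon x\mapsto x^2$ on $\G_m$ has the roots of unity as a dense set of periodic points yet has infinite order --- so one must exploit the additional rigidity provided by (b), namely that the periodic points of $T$ come from coherent permutation actions of the 2-generated submonoids $\langle g, h\rangle$ on the finite orbits $N\cdot x$, rather than scattered periodicity. This is exactly the strengthened input captured by \cite[Theorem 1.2]{whang} in characteristic zero, which I would invoke to conclude that $\phi$ has finite order. With $C$ torsion, Theorem \ref{extmainthm1}(2) yields $|C|<\infty$ and hence $|M\cdot x|<\infty$, and a further application of \cite[Theorem 1.2]{whang} converts the finite orbit into an $M$-periodic one.

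For the group case (c) $\Rightarrow$ (a), I would reduce to the already-established implication (b) $\Rightarrow$ (a) by showing that (c) implies (b) when $M$ is a group. For a $2$-generated subgroup $H = \langle f_1, f_2\rangle \leq M$, condition (c) gives that every element $w\in H$ satisfies $w^{n_w}(x) = x$ for some $n_w\geq 1$. Combining this with Theorem \ref{bl0}(2) (a finitely generated group of automorphisms of a scheme is virtually torsion-free in characteristic zero) and with \cite[Theorem 1.2]{whang}, one deduces that $H\cdot x$ is finite; the permutation property is automatic since $H$ is a group. The $2$-generated submonoid $\langle f_1, f_2\rangle \subseteq H$ inherits the permutation property on its smaller orbit, yielding (b) and hence the group-case conclusion via the previous step.
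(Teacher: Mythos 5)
Your treatment of (b)$\Rightarrow$(a) is essentially the paper's argument, just inlined: the paper notes that \cite[Theorem 1.2]{whang} gives a single constant $n$ (depending only on $V$, $x$, and the ring of definition) with $|N\cdot x|\le n$ for every $2$-generated $N$, and then quotes Corollary \ref{orbit}, (2)$\Rightarrow$(1); you instead re-run the proof of Corollary \ref{orbit}, and your use of \cite{whang} at the torsion step does work once you state explicitly that what it supplies is a period bound \emph{uniform} in $N$, so that $\phi^{n!}$ is the identity on the dense set $T$ and hence on $A$ (integral, separated). One slip: your closing claim that ``a further application of \cite[Theorem 1.2]{whang} converts the finite orbit into an $M$-periodic one'' is not what that theorem does; the permutation property follows directly from (b): for $f,g\in M$, $f$ permutes $\langle f,g\rangle\cdot x\ni g(x)$, so $f$ is surjective, hence bijective, on the finite set $M\cdot x$.

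The group case contains a genuine gap. The key idea in the paper is conjugation: since $M$ is a group, $\langle f\rangle\cdot g(x)=g\bigl(\langle g^{-1}fg\rangle\cdot x\bigr)$, so hypothesis (c) together with the uniform bound of \cite[Theorem 1.2]{whang} bounds $|\langle f\rangle\cdot g(x)|$ for \emph{all} $f,g\in M$, which is exactly condition (3) of Corollary \ref{orbit}, and that corollary finishes the proof. Your sketch never produces this transfer from the single point $x$ to the other orbit points. You assert that, for a $2$-generated subgroup $H$, ``every $w\in H$ satisfies $w^{n_w}(x)=x$'' combined with Theorem \ref{bl0}(2) and \cite[Theorem 1.2]{whang} yields $|H\cdot x|<\infty$, but no mechanism is given and the cited tools do not apply as stated: elements of $H$ need not be torsion as automorphisms (only their action at the one point $x$ is periodic), so virtual torsion-freeness says nothing here, and periodicity of $x$ alone gives no control of the dynamics at the points $h(x)$, which is what the torsion-category argument (or Corollary \ref{orbit}) actually requires. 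The purely group-theoretic escape---the subgroup generated by all $w^{n!}$ fixes $x$, so $|H\cdot x|$ is bounded by the order of the exponent-$n!$ quotient of $H$---stalls at the bounded Burnside problem, since that quotient need not be residually finite (Novikov--Adian). Once the conjugation observation is added, every point $h(x)$ of the orbit becomes $\langle w\rangle$-periodic with uniformly bounded period and your (b)$\Rightarrow$(a) machinery applies verbatim; but as written this decisive step is missing.
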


\begin{proof}
The implication (a)$\implies$(b) is clear, so let us show (b)$\implies$(a). Assume (b). By \cite[Theorem 1.2]{whang}, there is a constant $n\geq0$, depending only on $V$ and the finitely generated ring over which $V$, $x$, and $M$ are defined, such that $|N\cdot x|\leq n$ for every $2$-generated submonoid $N\leq M$ by $N$-periodicity of $x$. This implies (a) by Corollary \ref{orbit}. Suppose now that $M$ is a group. Since (a)$\implies$(c) is clear, we shall show (c)$\implies$(a). Let us assume (c). In light of Corollary \ref{orbit}, it suffices to show that there is a constant $n\geq0$ such that $|\langle f\rangle \cdot g(x)|\leq n$ for every $f,g\in M$, or (since $M$ is a group) equivalently $|\langle h^{-1}f h\rangle\cdot x|\leq n$ for every $f,h\in M$, or equivalently $|\langle g\rangle\cdot x|\leq n$ for every $g\in M$. Now, since $\langle g\rangle\cdot x$ is finite for every $g\in M$ by hypothesis, the desired result follows again by Theorem \cite[Theorem 1.2]{whang}.
\end{proof}

\end{document}